\documentclass[a4paper,11pt]{article}
\usepackage[noargminspace]{okicmd}
\usepackage{okithm}
\usepackage{amsthm,thmtools}
\usepackage[top=80pt, bottom=90pt, left=70pt, right=70pt]{geometry}
\usepackage{comment}
\usepackage{color}
\usepackage[colorlinks=true,linkcolor=blue,citecolor=blue]{hyperref}
\usepackage{cleveref}
\usepackage{cases}
\usepackage{enumitem}
\usepackage{xspace}

\usepackage{tikz}
\usetikzlibrary{positioning}
\usetikzlibrary{intersections}
\usetikzlibrary{calc}
\usetikzlibrary{arrows.meta}
\usetikzlibrary{shapes.geometric}
\usetikzlibrary{shapes.misc}
\usetikzlibrary {decorations.pathmorphing}

\newcounter{nodecount}

\declaretheorem[style=plain,name=Theorem]{theorem}
\declaretheorem[style=plain,sibling=theorem,name=Lemma]{lemma}
\declaretheorem[style=plain,sibling=theorem,name=Proposition]{proposition}

\crefname{theorem}{Theorem}{Theorems}
\crefname{proposition}{Proposition}{Propositions}
\crefname{lemma}{Lemma}{Lemmas}
\crefname{exmp}{Example}{Examples}
\crefname{corollary}{Corollary}{Corollarys}
\crefname{claim}{Claim}{Claims}
\crefname{remark}{Remark}{Remarks}
\crefname{section}{Section}{Sections}

\newcommand{\BP}{\mathrm{P}}

\newcommand{\exaxiom}{{\rm ($\Delta$-EXC)}\xspace}
\newcommand{\exaxiomm}{{\rm (BS-EXC)}\xspace}
\newcommand{\LP}{{\rm (LP)}\xspace}
\newcommand{\ub}{r}
\newcommand{\lb}{q}

\newcommand{\dep}{\mathrm{dep}}

\DeclareMathOperator{\conv}{conv}
\DeclareMathOperator{\cone}{cone}

\makeatletter

\@addtoreset{equation}{section}
\makeatother

%%%%%%%%%%%%%%%%%%%%%%%%%%%%%%
% add below what is needed
%%%%%%%%%%%%%%%%%%%%%%%%%%%%%

\bibliographystyle{plain}
\usepackage{bm}
\allowdisplaybreaks

\begin{document}
	\title{Characterizations of the set of integer points\\in an integral bisubmodular polyhedron}
	\author{Yuni Iwamasa\thanks{Department of Communications and Computer Engineering, Graduate School of Informatics, Kyoto University, Kyoto 606-8501, Japan.
			Email: \texttt{iwamasa@i.kyoto-u.ac.jp}}}
	\date{\today}
	\maketitle
	
\begin{abstract}
In this note, we provide two characterizations of the set of integer points in an integral bisubmodular polyhedron.
Our characterizations do not require the assumption that a given set satisfies the hole-freeness, i.e.,
the set of integer points in its convex hull coincides with the original set.
One is a natural multiset generalization of the exchange axiom of a delta-matroid,
and the other comes from the notion of the tangent cone of an integral bisubmodular polyhedron.
\end{abstract}
\begin{quote}
	{\bf Keywords: }
 integral bisubmodular polyhedron, exchange axiom, BS-convex set, M-convex set, jump system
\end{quote}

\section{Introduction}\label{sec:intro}

The set of integer points in an integral submodular polyhedron
is called an \emph{M-convex set}~\cite{AM/M96}.
This is a polyhedral generalization of a matroid defined by the base family.
Indeed,
a matroid coincides with the set of integer points in the base polytope of the matroid,
where the base polytope is the integral submodular polytope with respect to the rank function of the matroid.
Hence, the concept of M-convex set can be obtained by generalizing ``the rank function of the matroid'' to ``a general integral submodular function''
in the above argument on the matroid base family.

The relation between integral submodular functions and M-convex sets
can be viewed as a discrete version of the conjugacy.
In convex analysis,
it is well-known~\cite{book/Rockafellar} that
there is a one-to-one correspondence between positively homogeneous closed proper convex functions
and (the indicator functions of) closed convex sets via the Legendre--Fenchel transformation.
From this viewpoint,
(the Lov\'{a}sz extension of) an integral submodular function corresponds to a positively homogeneous closed proper convex function,
and an M-convex set is its corresponding convex set.
By combining convex analysis and submodular/matroid theory,
Murota developed a theory of ``discrete convexity'' on integer lattices, called \emph{discrete convex analysis (DCA)}~\cite{book/Murota03}.
In DCA,
the M-convexity is a fundamental convex concept of a set of integer points in the integer lattice.
We here note that an M-convex set is clearly \emph{hole-free}, i.e.,
the intersection of the convex hull of an M-convex set and the integer lattice coincides with the original set,
which is a naturally required condition for a set of integer points to be ``convex.''

An M-convex set admits several exchange-type axioms,
which are multiset versions of the exchange axioms of a matroid.
Such axioms are more useful in checking the M-convexity of a given set than the polyhedral definition of an M-convex set,
and
are used in devising a simple greedy algorithm for a linear optimization problem over an M-convex set.
It is worth mentioning that the fact that ``a set of integer points satisfying the exchange axioms is hole-free'' is nontrivial.

In this note, we address a bisubmodular/delta-matroidal generalization of M-convex sets.
The set of integer points in an integral bisubmodular polyhedron
is called a \emph{BS-convex set}~\cite{DO/F14},
which can also be viewed as a polyhedral generalization of a \emph{delta-matroid}~\cite{AM/DH86,MP/B87,DM/CK88,MP/Q88};
a delta-matroid coincides with
the set of integer points in its feasible set polytope of the delta-matroid ($=$ the integral bisubmodular polytope with respect to the rank function of the delta-matroid).
A BS-convex set can also be considered as a discrete convex set,
since it corresponds to the conjugate of (the Lov\'{a}sz extension of) an integral bisubmodular function
that can be viewed as positively homogeneous proper closed convex.

The present article particularly focuses on exchange-type characterizations of BS-convex sets.
A few exchange-type characterizations of BS-convex sets are known~\cite{JORSJ/AFN94,SIDMA/BC95},
but all of them additionally require that a given set is hole-free.
For example,
a \emph{jump system}, introduced by Bouchet and Cunningham~\cite{SIDMA/BC95},
is defined by a certain exchange axiom,
and it is known that a hole-free set is BS-convex if and only if it is a jump system.
However, the hole-freeness assumption of a given set cannot be omitted,
since there exists a jump system that is not hole-free.
So far, no exchange-type characterization of a BS-convex set without any assumption on a given set
has been known.

Our main result is to introduce two exchange-type characterizations of a BS-convex set not requiring any assumption on the set.
The one is a natural multiset version of the exchange axiom of a delta-matroid.
The other one is based on the notion of the tangent cone of an integral bisubmodular polyhedron.
Both can be more useful in checking the BS-convexity of a given set than the polyhedral definition of a BS-convex set.

\section{Preliminaries}
Let $\R$ and $\Z$ denote the sets of reals and integers, respectively.
For a positive integer $k$, let $[k] \defeq \set{1,2,\dots,k}$.
For an index set $I$ and a tuple $t \in \R^I$ (or $t \in \Z^I$ or $t \in \set{-1,0,1}^I$),
its \emph{support} $\supp(t)$ is defined as the set of indices $i \in I$ such that the $i$-th element $t(i)$ of $t$ is nonzero,
that is,
$\supp(t) \defeq \set{i \in I}[t(i) \neq 0]$.

Let $V$ be a nonempty finite set.
For $p, x \in \R^V$,
we define $\inpr{p}{x} \defeq \sum_{u \in V} p(u) x(u)$.
For $X \subseteq \R^V$,
we denote by $\conv(X)$ and $\cone(X)$
the convex hull and conical hull of $X$,
respectively. That is,
\begin{align*}
    \conv(X) &\defeq \set*{ \sum_{i = 1}^n \lambda_i x_i }[$n:$ positive integer, $x_1,\dots,x_n \in X$, $\lambda_1,\dots, \lambda_n \geq 0$, $\sum_{i = 1}^n \lambda_i = 1$],\\
    \cone(X) &\defeq \set*{ \sum_{i = 1}^n \mu_i x_i }[$n:$ positive integer, $x_1,\dots,x_n \in X$, $\mu_1,\dots, \mu_n \geq 0$].
\end{align*}
A set $B \subseteq \Z^V$ of integer points
is said to be \emph{hole-free}
if $B$ coincides with the set of integer points in the convex hull of $B$,
i.e.,
$B = \conv(B) \cap \Z^V$.
For $u \in V$,
let $\chi_u$ denote the $u$-th unit vector.
For $p \in \Z^V$,
we define $\norm{p}_1$ as the $1$-norm $\sum_{u \in V} \abs{p(u)}$ of $p$.
We define $\Phi$ as the set of vectors $\alpha$ in $\set{-1,0,1}^V$ such that its $1$-norm $\norm{\alpha}_1$ is one or two,
namely,
\begin{align*}
    \Phi \defeq \set{\pm\chi_u}[ u \in V] \cup \set{\pm \chi_u \pm \chi_v}[u,v \in V,\ u \neq v].
\end{align*}
We also define
\begin{align*}
    \Phi(p, q) &\defeq \set{\alpha \in \Phi}[\norm{q - (p + \alpha)}_1 = \norm{q - p}_1 - \norm{\alpha}_1] \qquad (p, q \in \Z^V),\\
    \Phi_B(p, q) &\defeq \set{\alpha \in \Phi(p, q)}[p + \alpha \in B] \qquad (B \subseteq \Z^V, \ p,q \in B)
\end{align*}

We assume that any function $f$ on $\{-1, 0, 1\}^V$ appearing in the following satisfies $f(\zeros) = 0$,
where $\zeros$ denotes the all-zero vector.
A function $\funcdoms{f}{\set{-1, 0, 1}^V}{\R \cup \set{+\infty}}$ is said to be \emph{bisubmodular} (see e.g., \cite[Section~3.5~(b)]{book/Fujishige05})
if it satisfies the following bisubmodular inequalities;
\begin{align*}
    f(x) + f(y) \geq f(x \sqcap y) + f(x \sqcup y) \qquad (x, y \in \{-1, 0, 1\}^V),
\end{align*}
where the binary operations $\sqcap$ and $\sqcup$ are defined by
\begin{align*}
    (x \sqcap y)(u) \defeq
    \begin{cases}
    x(u) & \text{if $x(u) = y(u)$},\\
    0 & \text{if $x(u) \neq y(u)$},
    \end{cases}
    \qquad
    (x \sqcup y)(u) \defeq
    \begin{cases}
    x(u) & \text{if $x(u) = y(u)$ or $y(u) = 0$},\\
    y(u) & \text{if $x(u) = 0$},\\
    0 & \text{if $0 \neq x(u) \neq y(u) \neq 0$}
    \end{cases}
\end{align*}
for each $u \in V$.
It is known~\cite{MP/Q88} that
a function $\funcdoms{f}{\set{-1, 0, 1}^V}{\R \cup \set{+\infty}}$ is bisubmodular
if and only if its \emph{Lov\'{a}sz extension} $\funcdoms{\tilde{f}}{\R^V}{\R \cup \set{+\infty}}$
is (closed) convex;
the definition of Lov\'{a}sz extension of a function on $\set{-1, 0, 1}^V$ is given in \cite{MP/Q88}.
Since the Lov\'{a}sz extension of any function on $\set{-1, 0, 1}^V$ is positively homogeneous,
a bisubmodular function may play a role as a positively homogeneous proper closed convex function in this sense,
as described in Introduction.
We say that $f$ is \emph{integral}
if all finite values of $f$ are integral,
i.e., $f$ is a function from $\set{-1, 0, 1}^V$ to $\Z \cup \set{+\infty}$.

For a bisubmodular function $\funcdoms{f}{\set{-1, 0, 1}^V}{\R \cup \set{+\infty}}$,
its \emph{bisubmodular polyhedron} $\BP(f)$ is the polyhedron defined by
\begin{align*}
\BP(f) \defeq \set{ p \in \Z^V }[\inpr{p}{x} \leq f(x) \ (\forall x \in \set{-1,0,1}^V)].
\end{align*}
Note that the conjugate function $p \mapsto \sup_{x \in \R^V} \{\inpr{p}{x} - \tilde{f}(x)\}$
of the Lov\'{a}sz extension $\tilde{f}$ of $f$
is the indicator function of the bisubmodular polyhedron $\BP(f)$,
where $p \in \R^V$.
The tangent cone of $\BP(f)$ admits a simple representation as follows,
where for a convex set $C \subseteq \R^V$ and a point $p \in C$,
the \emph{tangent cone} of $C$ at $p$ is defined as $\cone(\set{ q - p }[q \in C])$.
\begin{lemma}[{\cite[Corollary~3.6]{MP/AF96}}]\label{lem:cone}
Let $\funcdoms{f}{\set{-1, 0, 1}^V}{\R \cup \set{+\infty}}$ be a bisubmodular function.
    For $p \in \BP(f)$,
    the tangent cone of $\BP(f)$ at $p$ coincides with $\cone(\set{\alpha \in \Phi}[ p + \epsilon \alpha \in \BP(f) \text{ for some } \epsilon > 0 ])$.
\end{lemma}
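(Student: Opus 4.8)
The plan is to prove the two inclusions $C \subseteq T$ and $T \subseteq C$ separately, where I write $T \defeq \cone(\set{q - p}[q \in \BP(f)])$ for the tangent cone and $C$ for the cone on the right-hand side of the statement. The first observation is that, because $\BP(f)$ is a polyhedron, $T$ coincides with the cone of feasible directions at $p$, namely $\set{d \in \R^V}[\inpr{d}{x} \le 0 \ (\forall x \in \mathcal{F})]$, where $\mathcal{F} \defeq \set{x \in \set{-1,0,1}^V}[\text{$f(x) < +\infty$ and $\inpr{p}{x} = f(x)$}]$ is the family of constraints of $\BP(f)$ that are tight at $p$. Granting this, the inclusion $C \subseteq T$ is immediate: if $\alpha \in \Phi$ satisfies $p + \epsilon \alpha \in \BP(f)$ for some $\epsilon > 0$, then for every tight $x \in \mathcal{F}$ we have $\epsilon \inpr{\alpha}{x} \le f(x) - \inpr{p}{x} = 0$, whence $\alpha \in T$; as $T$ is a cone, $C \subseteq T$ follows. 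Note also that, since $T$ is the feasible-direction cone, the generators of $C$ are exactly the $\Phi$-vectors lying in $T$, so it is the same to write $C = \cone(\Phi \cap T)$.

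For the reverse inclusion I would first isolate the structural property of $\mathcal{F}$ that drives everything. The starting point is the coordinatewise identity $x \sqcap y + x \sqcup y = x + y$, valid for all $x, y \in \set{-1,0,1}^V$ and verified by inspecting the cases in the definitions of $\sqcap$ and $\sqcup$. Combining it with bisubmodularity and with $p \in \BP(f)$ then shows that $\mathcal{F}$ is closed under $\sqcap$ and $\sqcup$: for tight $x, y$ one has
\begin{align*}
f(x \sqcap y) + f(x \sqcup y) &\le \inpr{p}{x \sqcap y} + \inpr{p}{x \sqcup y} \\
&= \inpr{p}{x} + \inpr{p}{y} = f(x) + f(y) \le f(x \sqcap y) + f(x \sqcup y),
\end{align*}
so equality holds throughout and both $x \sqcap y$ and $x \sqcup y$ are again tight.

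To establish $T \subseteq C$ it then suffices, since both are polyhedral cones and $C \subseteq T$ is already known, to show that every extreme ray of $T$ is spanned by a vector of $\Phi$ (the lineality space of $T$, if nontrivial, is treated by the same analysis). So I would take a generator $d$ of an extreme ray and pass to the subfamily $\mathcal{F}_d \defeq \set{x \in \mathcal{F}}[\inpr{d}{x} = 0]$ of constraints tight along $d$, which determine the ray $\set{\lambda d}[\lambda \ge 0]$ up to scaling. By the same identity and linearity of the inner product, $\mathcal{F}_d$ is itself closed under $\sqcap$ and $\sqcup$, because for $x, y \in \mathcal{F}_d$ the two nonpositive quantities $\inpr{d}{x \sqcap y}$ and $\inpr{d}{x \sqcup y}$ sum to $\inpr{d}{x} + \inpr{d}{y} = 0$ and hence must each vanish.

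The crux, and the step I expect to be the main obstacle, is to exploit this sublattice structure of $\mathcal{F}_d$ to conclude that $d$ has at most two nonzero coordinates of equal magnitude, with signs compatible with membership in $T$; that is, that $d$ is a positive multiple of some $\alpha \in \Phi$. Concretely I would analyze the positive and negative supports of $d$ and argue that if the support were too large, or the magnitudes unequal, then the tight signed sets in $\mathcal{F}_d$ would let one split $d$ as a sum of two distinct directions of $T$, contradicting extremality. This splitting is exactly the exchange operation underlying bisubmodular systems, and it is here that the full strength of bisubmodularity (as encoded in the $\sqcap/\sqcup$-closure of $\mathcal{F}_d$) is used. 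Once every extreme ray is shown to lie in $\Phi \cap T$, we obtain $T = \cone(\Phi \cap T) = C$, completing the proof.
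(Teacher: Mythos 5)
Your proposal sets up a reasonable framework but does not prove the lemma: the step you yourself flag as ``the crux'' --- that every extreme ray (and the lineality space) of the feasible-direction cone $T = \set{d \in \R^V}[\inpr{d}{x} \leq 0 \ (\forall x \in \mathcal{F})]$ is spanned by vectors of $\Phi$ whenever the tight family $\mathcal{F}$ is closed under $\sqcap$ and $\sqcup$ --- is only announced as a plan (``I would analyze \dots and argue that \dots''), never carried out. That step is not a routine verification; it is the entire content of the lemma, and it is exactly what Ando and Fujishige establish in \cite[Corollary~3.6]{MP/AF96}, which is also all this paper does: the lemma is cited there, not reproved. The parts you do prove are correct in substance: the identity $x \sqcap y + x \sqcup y = x + y$, the resulting $\sqcap/\sqcup$-closure of $\mathcal{F}$, and the inclusion $\cone(\Phi \cap T) \subseteq T$. (Minor slip: both inequality signs in your displayed chain point the wrong way; $p \in \BP(f)$ gives $f(x \sqcap y) + f(x \sqcup y) \geq \inpr{p}{x \sqcap y} + \inpr{p}{x \sqcup y}$, and bisubmodularity gives $f(x) + f(y) \geq f(x \sqcap y) + f(x \sqcup y)$; the corrected chain still closes up and yields tightness of $x \sqcap y$ and $x \sqcup y$.)

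To see that the deferred step is where all the work lies, note that the claim you need is false for general families of $\{-1,0,1\}$-coefficient constraints, so it cannot follow from polyhedral generalities alone. Take $V = \{1,2,3\}$ and the (non-closed) family $\mathcal{F}_0 = \{\chi_1 + \chi_3,\ \chi_2 + \chi_3\}$. The cone $T_0 = \set{d \in \R^V}[d(1) + d(3) \leq 0,\ d(2) + d(3) \leq 0]$ contains $d = (1,1,-1)$, but the only vectors of $\Phi \cap T_0$ with positive first (respectively second) coordinate are $\chi_1 - \chi_3$ (respectively $\chi_2 - \chi_3$), and every vector of $\Phi \cap T_0$ has nonpositive third coordinate; hence any nonnegative combination of $\Phi \cap T_0$ matching $d(1) = d(2) = 1$ puts weight at least $1$ on each of $\chi_1 - \chi_3$ and $\chi_2 - \chi_3$ and so has third coordinate at most $-2$, a contradiction. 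Thus $T_0 \neq \cone(\Phi \cap T_0)$. It is precisely the closure elements $(\chi_1 + \chi_3) \sqcup (\chi_2 + \chi_3) = \chi_1 + \chi_2 + \chi_3$ (whose constraint cuts off $(1,1,-1)$) and $(\chi_1 + \chi_3) \sqcap (\chi_2 + \chi_3) = \chi_3$ that restore the statement. So the combinatorial analysis you postponed --- exploiting the signed lattice structure of $\mathcal{F}$ to split any non-$\Phi$ direction of $T$ --- is unavoidable, and until it is supplied your argument establishes only the easy inclusion $\cone(\Phi \cap T) \subseteq T$.
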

If $f$ is integral, then $\BP(f)$ is an integral polyhedron~\cite[Corollary~5.4]{SIDMA/BC95},
which is called an \emph{integral bisubmodular polyhedron}.

A nonempty set $B \subseteq \Z^V$ is called a \emph{BS-convex set}~\cite{DO/F14}
if there exists an integral bisubmodular function $\funcdoms{f}{\set{-1, 0, 1}^V}{\Z \cup \set{+\infty}}$
such that $B = \BP(f) \cap \Z^V$.
Note that the discrete conjugate function $p \mapsto \sup_{x \in \Z^V} \{\inpr{p}{x} - \tilde{f}(x)\}$
of the Lov\'{a}sz extension $\tilde{f}$
is the indicator function of the BS-convex set,
where $p \in \Z^V$.
In this sense,
a BS-convex set is ``discrete convex'' as described in Introduction.

A nonempty set $J \subseteq \Z^V$ is called a \emph{jump system}~\cite{SIDMA/BC95}
if it satisfies the following exchange axiom, called the \emph{2-step axiom}:
\begin{description}
\item[{\rm (J-EXC)}]
For any $p, q \in J$ and $u \in \supp(q- p)$,
\begin{itemize}
    \item there exists $\alpha \in \Phi_B(p, q)$ such that $u \in \supp(\alpha)$, or
    \item $\norm{p(u) - q(u)}_1 \geq 2$ and $p + 2 s_u \in J$ for the unique $s_u \in \set{\pm \chi_u}$ satisfying $s_u \in \Phi(p, q)$.
\end{itemize}
\end{description}
The above condition can be rephrased as:
For any $p, q \in J$ and $u \in \supp(q- p)$,
we have $p + s_u \in J$ for the unique $s_u \in \set{\pm \chi_u}$ satisfying $s_u \in \Phi(p, q)$,
or $p + s_u + s_v \in J$ for some $v \in V$ and $s_v \in \set{\pm \chi_v}$ satisfying $s_v \in \Phi(p + s_u, q)$.
This is the reason why (J-EXC) is called as the 2-step axiom.

A jump system is a generalization of a BS-convex set in the following sense.
\begin{proposition}[{\cite[Theorems~4.4 and~5.3]{SIDMA/BC95}}]\label{prop:jump system}
For any jump system $J \subseteq \Z^V$,
its convex hull $\conv(J)$ is an integral bisubmodular polyhedron.
Conversely,
for any integral bisubmodular function $\funcdoms{f}{\set{-1, 0, 1}^V}{\Z \cup \set{+\infty}}$,
the corresponding BS-convex set $\BP(f) \cap \Z^V$ is a jump system.
\end{proposition}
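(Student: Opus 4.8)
The plan is to prove the two implications separately: first that an integral bisubmodular polyhedron forces the two-step axiom on its integer points, and conversely that the two-step axiom forces the convex hull to be cut out by bisubmodular inequalities. For the first, let $B=\BP(f)\cap\Z^V$ with $f$ integral bisubmodular, fix $p,q\in B$ and $u\in\supp(q-p)$, and assume $q(u)>p(u)$ (the opposite sign being symmetric), so that $s_u=\chi_u$ is the unique element of $\{\pm\chi_u\}$ in $\Phi(p,q)$. Since $p,q\in\BP(f)$ and $\BP(f)$ is convex, the direction $q-p$ lies in the tangent cone of $\BP(f)$ at $p$, so by \cref{lem:cone} it is a nonnegative combination of vectors $\alpha\in\Phi$ with $p+\epsilon\alpha\in\BP(f)$ for some $\epsilon>0$. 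Comparing $u$-coordinates, some such feasible direction $\alpha$ has $\alpha(u)=1$, and since $\alpha\in\Phi$ it is either $\chi_u$ or $\chi_u\pm\chi_v$ for some $v\neq u$. If $\alpha=\chi_u$, then integrality of $f$ promotes the infinitesimal move to a unit move: each inequality $\inpr{\cdot}{y}\le f(y)$ with $y(u)=1$ that is slack at $p$ is slack by at least $1$, so $p+\chi_u\in B$ and the first alternative of (J-EXC) holds.

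If instead $p+\chi_u\notin B$, equivalently the only feasible $\Phi$-directions raising coordinate $u$ are of the form $\chi_u\pm\chi_v$, I would pass to the combinatorial structure of tight inequalities. Using the identity $x+y=(x\sqcap y)+(x\sqcup y)$, valid for all $x,y\in\{-1,0,1\}^V$, together with bisubmodularity, the family of $y$ that are tight at $p$ (i.e.\ $\inpr{p}{y}=f(y)$) is closed under $\sqcap$ and $\sqcup$, and so is its subfamily with $y(u)=1$, which is nonempty exactly in this case. Taking the $\sqcap$-minimal such member $y^\ast$ and using $\inpr{q-p}{y^\ast}\le 0$ against the positive $u$-contribution produces a coordinate $v$ with $y^\ast(v)(q(v)-p(v))<0$; setting $s_v=\mathrm{sign}(q(v)-p(v))\chi_v$ one checks $s_v\in\Phi(p+s_u,q)$. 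Minimality of $y^\ast$ forces every inequality violated by $p+\chi_u$ to have its $v$-coordinate equal to $-\mathrm{sign}(q(v)-p(v))$, so that the compensating step $s_v$ repairs all of them simultaneously. Controlling the inequalities that $s_v$ could \emph{newly} tighten, so as to conclude $p+s_u+s_v\in B$, is the main obstacle of this direction, and is exactly where integrality and the $\sqcap,\sqcup$-lattice structure must be combined.

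For the converse, given a jump system $J$ set $f(x)\defeq\sup\{\inpr{p}{x}:p\in J\}$ for $x\in\{-1,0,1\}^V$. Then $f(\zeros)=0$, $f$ is integral since each finite value equals $\inpr{p}{x}\in\Z$ for a maximizing $p\in J$, and $\conv(J)\subseteq\BP(f)$ is immediate. Two things remain: that $f$ is bisubmodular, and that $\BP(f)\subseteq\conv(J)$. For the second I would run the greedy algorithm: for any objective $c$, greedily raising coordinates against the bound $f$ yields an integral optimal solution of $\max_{\BP(f)}\inpr{c}{\cdot}$, and using (J-EXC) to merge the per-coordinate witnesses shows this optimum is attained at a point of $J$. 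Hence $\max_{p\in J}\inpr{c}{p}=\max_{x\in\BP(f)}\inpr{c}{x}$ for every $c$, so the two closed convex sets coincide; integrality of $\BP(f)$ then follows from \cref{lem:cone} as recorded after it.

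The hard part is bisubmodularity, $f(x)+f(y)\ge f(x\sqcap y)+f(x\sqcup y)$. I would take maximizers $p,q\in J$ for $x$ and $y$ and treat (J-EXC) as an exchange engine: define a potential measuring the distance from the pair $(p,q)$ to a pair simultaneously optimal for $(x\sqcap y,\,x\sqcup y)$, and show each two-step move supplied by (J-EXC) decreases the potential without decreasing $\inpr{p}{x}+\inpr{q}{y}$, terminating at a witnessing pair. Verifying that (J-EXC) always furnishes a move compatible with the signed operations $\sqcap$ and $\sqcup$ — equivalently, that the jump-system exchanges respect the bisubmodular combinatorics — is the principal obstacle of the whole proposition, and the same exchange mechanism is what powers the greedy merging used above.
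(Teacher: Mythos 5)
The paper itself gives no proof of this proposition: it is imported verbatim from Bouchet and Cunningham \cite{SIDMA/BC95}, so the only benchmark is whether your argument is complete on its own terms, and it is not. In the direction ``BS-convex $\Rightarrow$ jump system,'' your setup is sound: the promotion of an infinitesimally feasible $\chi_u$ to $p+\chi_u\in B$ via integrality is correct, your minimal tight vector $y^\ast$ is exactly $\dep(p,\chi_u)$ in the paper's notation (and the family of tight vectors is indeed closed under $\sqcap$ and $\sqcup$ by the identity $x+y=(x\sqcap y)+(x\sqcup y)$), and the coordinate $v$ with $y^\ast(v)(q(v)-p(v))<0$ exists and yields $s_v$ with $-s_v\preceq y^\ast$. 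But the step you yourself flag as ``the main obstacle'' --- that $p+\chi_u+s_v$ then actually lies in $B$ --- is precisely the content of the paper's \Cref{lem:basic}~(2), a nontrivial structural fact cited from Ando--Fujishige \cite{MP/AF96}, and you neither prove it nor invoke it. It is provable with the tools you set up: any blocking constraint $z$ is either tight with $\inpr{s_u+s_v}{z}\geq 1$ or slack by exactly one with $\inpr{s_u+s_v}{z}=2$, and in each case forming $z\sqcap y^\ast$ and $z\sqcup y^\ast$ and using the $\preceq$-minimality of $y^\ast$ gives a contradiction. As written, however, this is a named hole, not a proof.

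The converse direction has the larger gap. Bisubmodularity of the support function $f(x)=\sup_{p\in J}\inpr{p}{x}$ is Theorem~4.4 of Bouchet--Cunningham and is the substantial half of the proposition; your proposal to ``define a potential measuring the distance to a simultaneously optimal pair and show each (J-EXC) move decreases it'' specifies no potential, no invariant, and no termination argument, so there is nothing that can be checked. The inclusion $\BP(f)\subseteq\conv(J)$ via a greedy algorithm is likewise only asserted (the merging of per-coordinate witnesses using (J-EXC) is exactly where the work lies), and the case of unbounded $J$ needs care, since closedness of $\conv(J)$ is part of what must be established before equality of support functions yields equality of sets. In sum, your proposal is a reasonable roadmap that correctly locates the two genuine difficulties --- the two-step feasibility lemma and the bisubmodularity of the support function --- but both are deferred rather than resolved, so neither direction of the proposition is actually proved.
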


\Cref{prop:jump system} says that, for a hole-free set $B \subseteq \Z^V$, it is BS-convex
if and only if it is a jump system.
While all BS-convex sets are hole-free, which immediately follows from the definition,
a jump system is not hole-free in general.
For example, the set $\{0, 2\} \subseteq \Z$ is a jump system but is not hole-free.
Hence, the assumption of the hole-freeness of a given set cannot be omitted.

\section{Characterizations of a BS-convex set}
In this section, we present two exchange-type characterizations of a BS-convex set with no assumption on a given set.

The first one is the condition obtained from (J-EXC) by removing the second condition on the case of $\norm{p(u) - q(u)}_1 \geq 2$.
\begin{description}
\item[\exaxiom]
For any $p, q \in B$ and $u \in \supp(q-p)$,
there exists $\alpha \in \Phi_B(p, q)$ such that $u \in \supp(\alpha)$.
\end{description}
We here note that a nonempty set $B \subseteq \set{0,1}^V$ is called a \emph{delta-matroid}~\cite{AM/DH86,MP/B87,DM/CK88,MP/Q88}
if $B$ satisfies \exaxiom.
Hence
the condition \exaxiom is a natural multiset version of the exchange axiom of a delta-matroid.

The second one comes from the notion of the tangent cone.
By the convexity of $\BP(f)$,
for any $p, q \in \BP(f) \cap \Z^V$
the difference $q-p$ of $q$ and $p$ belongs to the tangent cone of $\BP(f)$ at $p$,
namely, $q-p \in \cone(\set{\alpha \in \Phi}[ p + \epsilon \alpha \in \BP(f) \text{ for some } \epsilon > 0 ])$ by \Cref{lem:cone}.
The following condition requires that
$q-p$ particularly belongs to $\cone(\Phi_B(p, q))$
and we can take the nonnegative combination coefficients of $q-p$ to be half-integral.
\begin{description}
\item[\exaxiomm]
For any $p, q \in B$, there exist $\alpha_1, \alpha_2, \dots, \alpha_k \in \Phi_B(p, q)$ such that $p + \sum_{i = 1}^k \alpha_i/2 = q$.
\end{description}

Our main result is the following:
\begin{theorem}\label{thm:characterization}
Let $B \subseteq \Z^V$ be a nonempty set of integer points.
The following conditions {\rm (a)}, {\rm (b)}, and {\rm (c)} are equivalent.
\begin{description}
    \item[{\rm (a)}] $B$ is a BS-convex set, i.e., $B = \BP(f) \cap \Z^V$ for some integral bisubmodular function $f$ on $\{-1, 0, 1\}^V$.
    \item[{\rm (b)}] $B$ satisfies \exaxiom.
    \item[{\rm (c)}] $B$ satisfies \exaxiomm.
\end{description}
\end{theorem}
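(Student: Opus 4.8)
The plan is to establish all three equivalences through the implications $\text{(a)} \Rightarrow \text{(b)}$, $\text{(c)} \Rightarrow \text{(b)}$, $\text{(a)} \Rightarrow \text{(c)}$, and $\text{(b)} \Rightarrow \text{(a)}$; the first two are short and the latter two carry the content. The implication $\text{(c)} \Rightarrow \text{(b)}$ is immediate: given $p,q \in B$ and $u \in \supp(q-p)$, the $u$-th coordinate of $q - p = \sum_{i=1}^k \alpha_i/2$ is nonzero, so some summand $\alpha_i \in \Phi_B(p,q)$ satisfies $u \in \supp(\alpha_i)$, which is exactly \exaxiom. For $\text{(a)} \Rightarrow \text{(b)}$ I would invoke \Cref{prop:jump system}: a BS-convex set is a jump system, hence satisfies (J-EXC), and it is hole-free because $B = \BP(f) \cap \Z^V$ with $\BP(f)$ integral, so $\conv(B) = \BP(f)$ and $\conv(B) \cap \Z^V = B$. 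In the second branch of (J-EXC), where $p + 2 s_u \in B$ with $s_u \in \set{\pm\chi_u}$, the midpoint $p + s_u$ of $p$ and $p + 2 s_u$ lies in $\conv(B) \cap \Z^V = B$; since $s_u \in \Phi(p,q)$ and $u \in \supp(s_u)$, this upgrades the two-step move to the single-step conclusion of \exaxiom.

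For $\text{(b)} \Rightarrow \text{(a)}$ I would first note that \exaxiom is literally the first alternative of (J-EXC), so $B$ is a jump system and, by \Cref{prop:jump system}, $\conv(B)$ is an integral bisubmodular polyhedron $\BP(f)$. It then remains to prove hole-freeness, $\BP(f) \cap \Z^V = B$, after which $B$ is BS-convex by definition. I would argue by contradiction: assuming $\hat{B} := \BP(f)\cap\Z^V$ strictly contains $B$, choose $r \in \hat{B} \setminus B$ minimizing its $\ell_1$-distance $\delta$ to $B$, realized by $p \in B$. Since $\hat{B}$ is BS-convex it satisfies \exaxiom by the already-proven $\text{(a)} \Rightarrow \text{(b)}$; applying it to $(r,p) \in \hat{B}$ and a coordinate $u \in \supp(p-r)$ yields $\beta \in \Phi_{\hat{B}}(r,p)$ with $u \in \supp(\beta)$ and $r + \beta \in \hat{B}$. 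As $\beta$ points toward $p$, the point $r+\beta$ is strictly closer to $p$ than $r$ is, so by minimality of $\delta$ it cannot lie in $\hat{B} \setminus B$; hence $r + \beta \in B$, forcing $\norm{\beta}_1 = \delta$ and $r + \beta = p$.

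The main obstacle is exactly the residual case this leaves: $\beta$ must be the full jump $p - r$ for every admissible $u$, which pins down $\delta \le 2$ and, when $\delta = 2$, says that the single-coordinate steps from $r$ toward $p$ all leave $\hat{B}$, so only the diagonal double step survives. Ruling this out is where I expect the real effort to go. I would exploit $r \in \conv(B)$: writing $r = \sum_j \lambda_j b_j$ with $b_j \in B$, I would select $q \in B$ lying on the far side of $r$ in a coordinate where the double step overshoots, and apply \exaxiom for $B$ itself to $(p,q)$ to produce a point of $B$ strictly nearer to $r$ than $p$. The delicate issue is that this exchange may again return a double step whose second coordinate moves away from $r$, so the argument must track a secondary potential—for instance the number of coordinates already agreeing with $r$—to guarantee genuine progress and close the contradiction.

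Finally, for $\text{(a)} \Rightarrow \text{(c)}$ I would use \Cref{lem:cone}: for $p,q \in B = \BP(f)\cap\Z^V$ the difference $q-p$ lies in the tangent cone $\cone(\set{\alpha \in \Phi}[p+\epsilon\alpha \in \BP(f) \text{ for some } \epsilon > 0])$. I would refine the generators to those pointing toward $q$, i.e.\ to elements of $\Phi(p,q)$, and then replace each by a genuine first step landing in $B$. The half-integral coefficients in \exaxiomm are a necessity rather than a blemish: a double direction $\alpha$ with $\inpr{\alpha}{y}=2$ can meet a facet $\inpr{x}{y}\le f(y)$ of slack $1$ at $p$, so that $p+\alpha/2 \in \BP(f)$ while $p+\alpha \notin \BP(f)$, and it is precisely these half-steps that must be combined to reconstruct $q-p$. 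Assembling the four implications closes all three equivalences.
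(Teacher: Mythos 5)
Your skeleton (the implications, the use of \Cref{prop:jump system} to get $\conv(B)=\BP(f)$, the reduction of everything to hole-freeness, and the tangent-cone starting point via \Cref{lem:cone}) matches the paper, and your easy steps are correct: $\text{(c)}\Rightarrow\text{(b)}$ is fine, and your upgrade of (J-EXC) to \exaxiom for a BS-convex set via the midpoint $p+s_u\in\conv(B)\cap\Z^V=B$ is a valid proof of $\text{(a)}\Rightarrow\text{(b)}$. But both substantive implications are left with genuine gaps, and the missing pieces are exactly the paper's main technical content. For $\text{(b)}\Rightarrow\text{(a)}$, your distance-minimization correctly forces $\beta=p-r$ and $\delta\le 2$, but the residual case is not ruled out, and the repair you sketch does not work as stated: in the case $\delta=1$, say $p=r+\chi_u\in B$, applying \exaxiom in $B$ to $(p,q)$ with $q(u)\le r(u)$ yields either $-\chi_u$ (impossible, as $r\notin B$) or $-\chi_u+s_w$, producing $p'=r+s_w\in B$ --- a point at the same distance $1$ from $r$ whose single disagreeing coordinate has merely moved from $u$ to $w$. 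Your proposed secondary potential (``number of coordinates agreeing with $r$'') is constant along such steps, so no progress is guaranteed and the iteration can cycle. The paper avoids exactly this trap by working globally: it minimizes the potential $\theta(\lambda)=\sum_p\lambda_p\norm{p^*-p}_1$ over \emph{all convex representations} of the hole point, and its key \Cref{lem:exchange} produces a \emph{coordinated family} of exchanges $\alpha_1,\dots,\alpha_k\in\Phi_B(\lb,\ub)$, $\beta_1,\dots,\beta_l\in\Phi_B(\ub,\lb)$ with $\sum_i\alpha_i+\sum_j\beta_j=0$ (via a closed-walk argument in an auxiliary graph), so that mass can be rotated in the convex combination without overshooting; single pairwise exchanges do not suffice.

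For $\text{(a)}\Rightarrow\text{(c)}$ the gap is of the same nature. The step ``refine the generators to those pointing toward $q$ and replace each by a genuine first step landing in $B$'' is precisely what has to be proved, and it is not a local substitution: a generator $\alpha$ of the tangent cone with $p+\epsilon\alpha\in\BP(f)$ need not satisfy $p+\alpha\in B$, and a generator in $\Phi_B(p)$ need not lie in $\Phi(p,q)$. The paper handles this with the linear program \LP minimizing the total violation $\sum v(\alpha)\mu_\alpha$, showing feasibility via \Cref{lem:basic} (the $\dep$ machinery of Ando--Fujishige), and showing the optimal value is $0$ by an uncrossing step that combines two violating generators into admissible ones (\Cref{lem:sum} together with \Cref{lem:v}). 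Finally, your proposal never establishes that the coefficients can be taken \emph{half-integral}: you explain why halves are necessary, but \exaxiomm requires exhibiting a decomposition $q-p=\sum_i\alpha_i/2$ with $\alpha_i\in\Phi_B(p,q)$, and the paper obtains this from a nontrivial external result (Appa--Kotnyek) asserting that LPs whose constraint matrices have columns of $1$-norm at most $2$ admit half-integral optimal solutions. Without the violation-minimizing LP and the half-integrality theorem, condition (c) is not reached.
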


The implication $\text{(c)} \Rightarrow \text{(b)}$ easily follows.
Indeed,
for any $p, q \in B$,
there exist $\alpha_1, \alpha_2, \dots, \alpha_k \in \Phi_B(p, q)$ such that $p + \sum_{i = 1}^k \alpha_i/2 = q$
by \exaxiomm.
Then, for each $u \in \supp(q-p)$ at least one of $\alpha_1, \alpha_2, \dots, \alpha_k$ must satisfy $u \in \supp(\alpha_i)$,
which implies \exaxiom.

The proofs of $\text{(b)} \Rightarrow \text{(a)}$ and $\text{(a)} \Rightarrow \text{(c)}$ are given in \Cref{subsec:b=>a,subsec:a=>c},
respectively.

\subsection{Proof of $\text{(b)} \Rightarrow \text{(a)}$}\label{subsec:b=>a}
Suppose that $B$ satisfies the condition \exaxiom.
Since \exaxiom is stronger than (J-EXC),
the set $B$ is a jump system.
Thus, by \Cref{prop:jump system},
we have $\conv(B) = \BP(f)$ for some integral bisubmodular function $f$ on $\{-1, 0, 1\}^V$.
Hence, it suffices to show that $B = \conv(B) \cap \Z^V$, i.e., $B$ is hole-free.

It is clear that $B \subseteq \conv(B) \cap \Z^V$.
Take any $p^* \in \conv(B) \cap \Z^V$.
In the case where $B$ is infinite,
we replace $B$ with the intersection $B \cap [-n\ones, n\ones]$ for sufficiently large $n \in \Z_+$
such that $p^* \in \conv(B \cap [-n\ones, n\ones])$,
where $\ones$ denotes the all-one vector in $\Z^V$.
Then the resulting $B$ is finite and satisfies \exaxiom.
Therefore, we can assume that $B$ is finite.

Let
\begin{align*}
    C := \set{(\lambda_p)_{p \in B} \in \R^B}[p^* = \sum_{p \in B} \lambda_p p,\ \sum_{p \in B} \lambda_p = 1,\ \lambda_p \geq 0 \ (p \in B)].
\end{align*}
Since $p^* \in \conv(B) \cap \Z^V$,
the set $C$ is nonempty.
Moreover $C$ is compact by the definition of $C$ and the finiteness of $B$.
Define a function $\funcdoms{\theta}{C}{\R}$ by
\begin{align*}
    \theta(\lambda) \defeq \sum_{p \in B} \lambda_p \norm{p^* - p}_1
\end{align*}
for $\lambda = (\lambda_p)_{p \in B} \in C$.
Since $\theta$ is continuous and $C$ is a compact nonempty set,
the function $\theta$ attains its infimum, i.e., there exists $\lambda^* \in C$ such that $\theta(\lambda^*) = \inf \theta (= \min \theta)$.
We can see that $p^* \in B$ if and only if $\min \theta = 0$.

Suppose, to the contrary, that $p^* \notin B$, i.e., $\min \theta > 0$.
Let $\lambda^* = (\lambda^*_p)_{p \in B} \in \argmin \theta$.
Note that $p^* \notin \supp(\lambda^*)$.
Take $\lb, \ub \in \supp(\lambda^*)$ such that
$\lb(u') < p^*(u') < \ub(u')$
for some $u' \in V$;
such $\lb$ and $\ub$ exist
since $\sum_{p \in \supp(\lambda^*)} \lambda^*_p p = p^*$ and $p^* \notin \supp(\lambda^*)$.
Here the following holds.
\begin{lemma}\label{lem:exchange}
There exist $\alpha_1, \alpha_2, \dots, \alpha_k \in \Phi_B(\lb, \ub)$ and $\beta_1, \beta_2, \dots, \beta_l \in \Phi_B(\ub, \lb)$
such that $\sum_{i = 1}^k \alpha_i + \sum_{j = 1}^l \beta_j = 0$.
\end{lemma}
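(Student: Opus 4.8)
The plan is to prove the statement for arbitrary $\lb,\ub \in B$ with $\lb \neq \ub$, relying only on \exaxiom; neither the straddling of $u'$ nor the minimality of $\theta$ is needed for this particular lemma. Write $S \defeq \supp(\ub-\lb)$ and, for $w \in S$, put $\sigma_w \defeq \mathrm{sign}(\ub(w)-\lb(w)) \in \{+1,-1\}$. Two structural facts drive the argument. First, directly from the defining equality of $\Phi(\cdot,\cdot)$, every $\alpha \in \Phi_B(\lb,\ub)$ has $\supp(\alpha) \subseteq S$ with $\alpha(w)=\sigma_w$ on its support, and every $\beta \in \Phi_B(\ub,\lb)$ has $\supp(\beta) \subseteq S$ with $\beta(w)=-\sigma_w$ on its support. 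Second, applying \exaxiom to the pair $(\lb,\ub)$ and to the pair $(\ub,\lb)$ shows that each coordinate $u \in S$ lies in $\supp(\alpha)$ for some $\alpha \in \Phi_B(\lb,\ub)$ and in $\supp(\beta)$ for some $\beta \in \Phi_B(\ub,\lb)$; that is, both families of vectors cover $S$.

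Next I would reformulate the goal geometrically. Setting $A \defeq \cone(\Phi_B(\lb,\ub))$ and $A' \defeq \cone(\{-\beta : \beta \in \Phi_B(\ub,\lb)\})$, the desired relation $\sum_{i}\alpha_i + \sum_{j}\beta_j = 0$ is precisely the statement that $A$ and $A'$ have a common nonzero point. Indeed, $A \cap A'$ is a rational polyhedral cone, so a nonzero element admits a rational, and after clearing denominators an integral, representation $\sum_i \alpha_i = \sum_j(-\beta_j)\neq 0$ with $\alpha_i \in \Phi_B(\lb,\ub)$ and $\beta_j \in \Phi_B(\ub,\lb)$ (repeating generators according to their multiplicities), which is the conclusion. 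By the first structural fact, both $A$ and $A'$ lie in the pointed cone $O \defeq \{z \in \R^V : \supp(z)\subseteq S,\ \sigma_w z(w) \ge 0\ (w\in S)\}$, on which the functional $z \mapsto \sum_{w\in S}\sigma_w z(w)$ is strictly positive away from the origin.

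Finally I would show $A \cap A' \neq \{0\}$ by contradiction. If $A \cap A' = \{0\}$, then because $A$ and $A'$ are finitely generated and both pointed, a strict separating functional $u \in \R^V$ exists, with $\inpr{u}{\alpha}<0$ for every generator $\alpha$ of $A$ and $\inpr{u}{-\beta}>0$, i.e.\ $\inpr{u}{\beta}<0$, for every generator $\beta$ of $A'$. Introduce the aligned functional $v(w)\defeq\sigma_w u(w)$ for $w\in S$; then the inequalities become $v(a)+v(b)<0$ for each $\alpha\in\Phi_B(\lb,\ub)$ with support $\{a,b\}$ (and $v(a)<0$ if $\alpha=\sigma_a\chi_a$), and $v(a)+v(b)>0$ for each $\beta\in\Phi_B(\ub,\lb)$ with support $\{a,b\}$ (and $v(a)>0$ if $\beta=-\sigma_a\chi_a$). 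Let $\mu\defeq\max_{w\in S}v(w)$ and $\nu\defeq\min_{w\in S}v(w)$. Covering the maximizer by a forward vector forces $\mu+\nu<0$, whereas covering the minimizer by a backward vector forces $\mu+\nu>0$; the singleton-support subcases force $v$ to be signed and contradict covering on the opposite side even more directly. This contradiction yields $A\cap A'\neq\{0\}$, completing the proof.

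The step I expect to be the main obstacle is the passage from ``$A$ and $A'$ meet only at the origin'' to the existence of a strict separating functional: applying Gordan's theorem to the generator set $\Phi_B(\lb,\ub)\cup\Phi_B(\ub,\lb)$ requires excluding the degenerate case of a nontrivial nonnegative combination of generators summing to zero, and this is exactly where the pointedness of the orthant $O$ — equivalently, the strictly positive functional $z\mapsto\sum_{w\in S}\sigma_w z(w)$ supplied by the first structural fact — is used. Once strict separation is available, the short maximum/minimum argument on $v$ is the real heart of the matter.
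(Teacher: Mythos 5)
Your proposal is correct, but it reaches the lemma by a genuinely different route than the paper. The paper's proof is combinatorial and constructive: it builds an auxiliary graph (with self-loops allowed) on the vertex set $\supp(\ub - \lb)$ whose edges are the supports of the vectors in $\Phi_B(\lb,\ub)$ and $\Phi_B(\ub,\lb)$, uses \exaxiom exactly as you do to guarantee that every vertex is covered by edges of both kinds, and then extracts an alternating closed walk $(e_1,\dots,e_{2m})$ by following a chain of triples until it becomes periodic; weighting self-loops by $2$ and ordinary edges by $1$, the forward ($+\sigma_w$) and backward ($-\sigma_w$) contributions cancel at every vertex of the walk, producing the desired identity with explicit coefficients in $\{1,2\}$. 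You instead run a theorem-of-the-alternatives argument: the same two ingredients---the sign structure of $\Phi(\lb,\ub)$ and $\Phi(\ub,\lb)$, which places both families in a common pointed orthant, and the covering property supplied by \exaxiom---are fed into Gordan's theorem, where pointedness rules out the degenerate alternative (you correctly flag this as the crux, and your resolution via the strictly positive functional $z \mapsto \sum_{w}\sigma_w z(w)$ is sound) and the covering property refutes the separating functional through the maximizer/minimizer comparison; that comparison, including the singleton-support subcases, checks out. What each approach buys: yours is shorter on bookkeeping and more conceptual, at the cost of being nonconstructive---the integrality of the coefficients requires the (standard, but worth stating) step that a nonzero rational polyhedral cone contains a nonzero rational point whose representation over the integer generators can be cleared of denominators---while the paper's walk argument directly yields a combination with multiplicities $1$ or $2$ and makes it visible that both families genuinely appear. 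Note that the downstream use of the lemma (the update $\ub \leftarrow \ub + \beta_1$) needs $l \geq 1$; your argument does deliver this, since a nontrivial zero combination supported on only one of the two families would already violate pointedness.
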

\begin{proof}
Let $G$ denote an undirected graph (that can have self-loops) whose vertex set is $\supp(\ub - \lb)$
and whose edge set is $E_{\lb} \cup E_{\ub}$, where $E_{\lb} \defeq \set{\supp(\alpha)}[ \alpha \in \Phi_B(\lb, \ub) ]$
and $E_{\ub} \defeq \set{\supp(\beta)}[ \beta \in \Phi_B(\ub, \lb) ]$.
Note that, for each edge $e \in E_{\lb}$ (resp. $e \in E_{\ub}$),
there uniquely exists $\alpha \in \Phi_B(\lb, \ub)$ such that $e = \supp(\alpha)$ (resp. $\beta \in \Phi_B(\ub, \lb)$ such that $e = \supp(\beta)$);
we denote it by $\alpha_e$ (resp. $\beta_e$).
Also note that, by \exaxiom,
for each $u \in \supp( \ub - \lb )$,
there are edges incident to $u$ both in $E_{\lb}$ and in $E_{\ub}$.

For each $e \in E_{\lb}$ and $u \in e$, we choose one edge $f \in E_{\ub}$ with $u \in f$
and construct a triple $(e, u, f)$.
Similarly, for each $f \in E_{\ub}$ and $v \in f$, we choose one edge $e' \in E_{\lb}$ with $v \in e'$
and construct a triple $(f, v, e')$.
Let $T$ be the set of triples constructed as above.
Take any triple $(f_1, u_1, f_2) \in T$ with $f_1 \in E_q$.
By the definition of $T$,
there uniquely exists a triple $(f_2, u_2, f_3) \in T$ such that
$f_2 = \set{u_1, u_2}$.
Similarly, there uniquely exists a triple $(f_3, u_3, f_4) \in T$ such that
$f_3 = \set{u_2, u_3}$.
By repeating the above,
we obtain an infinite walk $(f_1, f_2, f_3, \dots)$ in $G$ such that $f_i \in E_q$ if $i$ is odd and $f_i \in E_r$ if $i$ is even.
By the finiteness of $T$, the infinite walk $(f_1, f_2, f_3, \dots)$ must contain a periodic structure $(\dots, e_1, e_2, \dots, e_{2m}, e_1, \dots)$ with $e_1 \in E_q$.
This implies that there is a closed walk $(e_1, e_2, \dots, e_{2m})$ in $G$ such that $e_i \in E_q$ if $i$ is odd and $e_i \in E_r$ if $i$ is even.

From such a closed walk $(e_1, e_2, \dots, e_{2m})$,
we define
\begin{align*}
    (a_i, \alpha_i) \defeq
    \begin{cases}
    (2, \alpha_{e_{2i-1}}) & \text{if $\card{e_{2i-1}} = 1$},\\
    (1, \alpha_{e_{2i-1}}) & \text{if $\card{e_{2i-1}} = 2$},
    \end{cases}
    \qquad
    (b_i, \beta_i) \defeq
    \begin{cases}
    (2, \beta_{e_{2i}}) & \text{if $\card{e_{2i}} = 1$},\\
    (1, \beta_{e_{2i}}) & \text{if $\card{e_{2i}} = 2$}.
    \end{cases}
\end{align*}
Then, for each $i \in [m]$, we have $\alpha_{e_{2i-1}} \in \Phi_B(\lb, \ub)$ and $\beta_{e_{2i}} \in \Phi_B(\ub, \lb)$.
Moreover, we can see that $\sum_{i = 1}^m a_i \alpha_{e_{2i-1}} + \sum_{i = 1}^m b_i \beta_{e_{2i}} = 0$.
Hence, the lemma holds.
\end{proof}

Let $\alpha_1, \alpha_2, \dots, \alpha_k \in \Phi_B(\lb, \ub)$ and $\beta_1, \beta_2, \dots, \beta_l \in \Phi_B(\ub, \lb)$
satisfying $\sum_{i = 1}^k \alpha_i + \sum_{j = 1}^l \beta_j = 0$;
such ones exist by \Cref{lem:exchange}.
Then, for a sufficiently small $\epsilon > 0$,
we have the following convex combination representation of $p^*$:
\begin{align*}
    p^* = \prn{\lambda^*_q - k \epsilon} q + \prn{\lambda^*_r - l \epsilon} r + \sum_{p \in B \setminus \set{q, r}} \lambda^*_p p + \epsilon \prn{ \sum_{i = 1}^k \prn{ q + \alpha_i } + \sum_{j = 1}^l \prn{ r + \beta_j } }.
\end{align*}
Let $\lambda^\prime = (\lambda^\prime_p)_{p \in B}$ denote the corresponding coefficient of $p^*$,
where we note that $\lambda^\prime \in C$ and $\supp(\lambda^\prime) = \supp(\lambda^*) \cup \set{ q+\alpha_i, r+\beta_j }[i \in [k], j \in [l]]$.

We can compute the value $\theta(\lambda^\prime)$ as follows.
Let
\begin{align*}
    S_+(\lb) &\defeq \set{u \in \supp(\ub - \lb)}[p^*(u) \leq \lb(u) < \ub(u) \text{ or } \ub(u) < \lb(u) \leq p^*(u)],\\
    S_-(\lb) &\defeq \set{u \in \supp(\ub - \lb)}[\lb(u) < \min\set{p^*(u), \ub(u)} \text{ or } \max\set{p^*(u), \ub(u)} < \lb(u)],\\
    s_u(\lb) &\defeq \card{ \set{ i \in [k] }[u \in \supp(\alpha_i)]} \qquad \prn{u \in \supp(\ub - \lb)}.
\end{align*}
We similarly define $S_+(\ub)$, $S_-(\ub)$, and $s_u(\ub)$.
We note that both $\set{S_+(\lb), S_-(\lb)}$ and $\set{S_+(\ub), S_-(\ub)}$ form bipartitions of $\supp(\ub - \lb)$.
Then we have
\begin{align*}
    \theta(\lambda^\prime) = \theta(\lambda^*) + \epsilon \prn{ \sum_{u \in S_+(\lb)} s_u(\lb) - \sum_{u \in S_-(\lb)} s_u(\lb) + \sum_{u \in S_+(\ub)} s_u(\ub) - \sum_{u \in S_-(\ub)} s_u(\ub) }.
\end{align*}
Furthermore, since $\sum_{i = 1}^k \alpha_i + \sum_{j = 1}^l \beta_j = 0$,
we have $s_u(q) = s_u(r)$
for every $u \in \supp(\ub - \lb)$.
Thus, by letting $S_+(\lb, \ub) \defeq S_+(\lb) \cap S_+(\ub)$ and $S_-(\lb, \ub) \defeq S_-(\lb) \cap S_-(\ub)$,
we obtain
\begin{align}\label{eq:decrease}
    \theta(\lambda^\prime) &= \theta(\lambda^*) + \epsilon \sum_{u \in S_+(\lb, \ub)} \prn{s_u(\lb) + s_u(\ub)}  - \epsilon \sum_{u \in S_-(\lb, \ub)} \prn{s_u(\lb) + s_u(\ub)}\notag\\
    &= \theta(\lambda^*) - \epsilon \sum_{u \in S_-(\lb, \ub)} \prn{s_u(\lb) + s_u(\ub)},
\end{align}
where the second equality follows from $S_+(\lb, \ub) = \emptyset$.
By the choice of $\lb$ and $\ub$,
the set $S_-(\lb, \ub)$, which is equal to $\set{u \in \supp(\ub - \lb)}[\lb(u) < p^*(u) < \ub(u) \text{ or } \ub(u) < p^*(u) < \lb(u)]$, contains $u'$; it is nonempty.
By the minimality of $\lambda^*$,
we have $\theta(\lambda^\prime) = \theta(\lambda^*)$,
implying that $\supp(\alpha_i) \cap S_-(\lb, \ub) = \supp(\beta_j) \cap S_-(\lb, \ub) = \emptyset$ for all $i \in [k]$ and $j \in [l]$.

We update $\lambda^*$ and $\ub$ as $\lambda^* \leftarrow \lambda^\prime$ and $\ub \leftarrow \ub + \beta_1$.
For the resulting $\ub$, the value $\norm{\ub - \lb}_1$ strictly decreases,
but the set $S_-(\lb, \ub)$ does not change
since $\supp(\beta_1) \cap S_-(\lb, \ub) = \emptyset$.

By repeating the above update finitely many times,
we finally obtain $\lambda^* \in C$ minimizing $\theta$ and $\lb, \ub \in \supp(\lambda^*)$ such that $S_-(\lb, \ub) = \supp(\ub - \lb)$.
For such $\lb$ and $\ub$,
we again repeat the above update.
Then, since $S_-(\lb, \ub) = \supp(\ub - \lb)$,
the set $S_-(\lb, \ub)$ includes $\supp(\alpha_i)$ and $\supp(\beta_j)$ for any $i \in [k]$ and $j \in [l]$.
Thus, by~\eqref{eq:decrease},
the resulting $\lambda^\prime \in C$ strictly decreases $\theta$,
which contradicts the minimality of $\lambda^*$.

\subsection{Proof of $\text{(a)} \Rightarrow \text{(c)}$}\label{subsec:a=>c}
Suppose that $B = \BP(f) \cap \Z^V$ for an integral bisubmodular function $f$ on $\{-1, 0, 1\}^V$.
For $p \in B$,
let $\Phi_B(p) \defeq \set{\alpha \in \Phi}[p + \alpha \in B]$.
Take any $p, q \in B$.
For $r \in \Z^V$,
we define
\begin{align*}
    v(r) &\defeq \sum \set{ \abs{r(u)} }[u \in V \colon r(u) (q(u)-p(u)) \leq 0 ].
\end{align*}
In particular, the function $v$ for $\alpha \in \Phi_B(p)$ measures the violation of $\alpha$ from $\Phi_B(p, q)$;
for $\alpha \in \Phi_B(p)$, we have that $v(\alpha) = 0$ if and only if $\alpha \in \Phi_B(p, q)$.
The following is an easy observation.
\begin{lemma}\label{lem:v}
Let $r, r^\prime \in \Z^V$.
Then we have
\begin{align*}
    v(r) + v(r^\prime)
    \begin{cases}
        > v(r + r^\prime) & \text{if $r(u) r^\prime(u) < 0$ for some $u \in V$},\\
        = v(r + r^\prime) & \text{otherwise}.
    \end{cases}
\end{align*}
\end{lemma}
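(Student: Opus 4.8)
The plan is to use the fact that $v$ splits as a sum over the coordinates of $V$ and to reduce the whole assertion to a one-variable computation at each coordinate. For $u \in V$, define $\funcdoms{\phi_u}{\Z}{\Z}$ by $\phi_u(a) \defeq \abs{a}$ if $a(q(u)-p(u)) \leq 0$ and $\phi_u(a) \defeq 0$ otherwise. Then $v(r) = \sum_{u \in V} \phi_u(r(u))$ for every $r \in \Z^V$, so both $v(r)+v(r')$ and $v(r+r')$ are sums of the corresponding one-variable quantities over $u \in V$. Hence it suffices to prove, for each fixed $u$ and all $a, b \in \Z$, that $\phi_u(a)+\phi_u(b) = \phi_u(a+b)$ when $ab \geq 0$ and $\phi_u(a)+\phi_u(b) > \phi_u(a+b)$ when $ab < 0$.

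First I would make $\phi_u$ explicit according to the sign of $d \defeq q(u)-p(u)$. If $d > 0$, then $ad \leq 0$ exactly when $a \leq 0$, so $\phi_u(a) = \max\set{0,-a}$ is the negative part of $a$; if $d < 0$, then symmetrically $\phi_u(a) = \max\set{0,a}$ is the positive part; and if $d = 0$, the defining inequality holds for every $a$, so $\phi_u(a) = \abs{a}$. In each of the three cases $\phi_u$ is a one-variable sublinear function, so $\phi_u(a)+\phi_u(b) \geq \phi_u(a+b)$ holds automatically; the content of the lemma is the precise condition for equality.

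Next I would settle the equality/strictness by inspecting the sign patterns of $a$ and $b$. When $ab \geq 0$ the two integers have the same sign or one of them is zero, and in each case a direct substitution gives equality: e.g.\ for $d > 0$ both $a,b \geq 0$ yield $0 = 0$, while both $a,b \leq 0$ yield $-a-b = -(a+b)$. When $ab < 0$, say $a > 0 > b$, the positive coordinate is suppressed while the negative one is fully counted, so for $d > 0$ one gets $\phi_u(a)+\phi_u(b) = -b > \max\set{0,-(a+b)} = \phi_u(a+b)$ irrespective of the sign of $a+b$. The case $d < 0$ is identical after exchanging the roles of positive and negative, and the case $d = 0$ is just the triangle inequality $\abs{a}+\abs{b} \geq \abs{a+b}$, which is strict exactly when $a$ and $b$ have opposite signs.

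Finally I would sum the coordinate-wise relations over $u \in V$. Since every coordinate satisfies $\phi_u(r(u))+\phi_u(r'(u)) \geq \phi_u(r(u)+r'(u))$, we always have $v(r)+v(r') \geq v(r+r')$, and the total is strict precisely when at least one coordinate is strict, i.e.\ when $r(u)r'(u) < 0$ for some $u \in V$; otherwise every coordinate contributes an equality and the two totals agree. This is exactly the claimed dichotomy. There is no genuinely hard step here; the only point needing care is the degenerate coordinate $d = 0$, where $\phi_u$ is the full absolute value rather than a one-sided part, so that the equality condition there is governed by the triangle inequality instead of by a positive or negative part.
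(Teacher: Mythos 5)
Your proof is correct. The paper gives no proof of this lemma at all---it is stated as ``an easy observation''---so there is no argument to diverge from; your coordinate-wise decomposition of $v$ into negative-part, positive-part, or absolute-value functions according to the sign of $q(u)-p(u)$, followed by the equality/strictness analysis of each one-variable sublinear function and summation over $u \in V$, is exactly the natural verification the author left implicit, including the correct handling of the degenerate coordinates with $q(u)=p(u)$.
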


We consider the following linear programming:
\begin{align*}
\LP \qquad
\begin{array}{ll}
	\text{Minimize} & \displaystyle\sum \set{v(\alpha) \mu_\alpha}[\alpha \in \Phi_B(p)]\\
	\text{subject to} & \displaystyle\sum\set{\mu_\alpha \alpha}[\alpha \in \Phi_B(p)] = q - p,\\
  &\displaystyle\mu_\alpha \geq 0 \quad (\alpha \in \Phi_B(p)).
	\end{array}    
\end{align*}
It suffices to show that
(i) the problem \LP has a feasible solution,
(ii) its optimal value is $0$,
and (iii) it admits a half-integral optimal solution.
Indeed, statements (i)--(iii)
imply that
$q - p = \sum_{\alpha \in \Phi_B(p, q)} \mu_\alpha \alpha$
for some nonnegative half-integral coefficient $\mu = (\mu_\alpha)_{\alpha \in \Phi_B(p, q)}$,
particularly, that $B$ satisfies \exaxiomm.

Before the proofs of (i) and (ii),
we recall basic facts on $\BP(f)$ and $B$.
For $x \in \set{-1, 0, 1}^V$,
let $\supp^+(x) \defeq \set{u \in V}[x(u) = 1]$
and $\supp^-(x) \defeq \set{u \in V}[x(u) = -1]$.
We define a partial order $\preceq$ on $\set{-1, 0, 1}^V$
by: $x \preceq y$ if and only if $\supp^+(x) \subseteq \supp^+(y)$ and $\supp^-(x) \subseteq \supp^-(y)$.
For $p \in B$ and $u \in V$,
we define
\begin{align*}
    \dep(p, \chi_u) &\defeq \bigsqcap \set{x \in \set{-1,0,1}^V}[u \in \supp^+(x),\ \inpr{p}{x} = f(x)],\\
    \dep(p, -\chi_u) &\defeq \bigsqcap \set{x \in \set{-1,0,1}^V}[u \in \supp^-(x),\ \inpr{p}{x} = f(x)],
\end{align*}
where $\sqcap \emptyset \defeq \zeros$.
By definition,
if $s_u \preceq \dep(p, s_v)$ for some $s_u \in \set{\pm \chi_u}$,
then we have $s_u \preceq \dep(p, s_u) \preceq \dep(p, s_v)$.
The following are known in \cite{MP/AF96}.
\begin{lemma}[\cite{MP/AF96}]\label{lem:basic}
\begin{itemize}
    \item[{\rm (1)}]
    Let $p \in B$, $u \in V$, and $s_u \in \set{\pm \chi_u}$.
    Then we have
    \begin{align*}
        s_u \in \Phi_B(p) \iff p + \epsilon s_u \in \BP(f) \text{ for some } \epsilon > 0 \iff \dep(p, s_u) = \zeros.
    \end{align*}
    \item[{\rm (2)}]
    Let $p \in B$, $u, v \in V$ with $u \neq v$, $s_u \in \set{\pm \chi_u}$, and $s_v \in \set{\pm \chi_u}$.
    Suppose that $p + s_u \notin B$.
    Then we have
    \begin{align*}
        s_u + s_v \in \Phi_B(p) \iff p + \epsilon (s_u + s_v) \in \BP(f) \text{ for some } \epsilon > 0 \iff -s_v \preceq \dep(p, s_u).
    \end{align*}
\end{itemize}
\end{lemma}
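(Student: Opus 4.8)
The plan is to reduce both parts to two facts about the \emph{tight} inequalities at $p$, i.e.\ those $x \in \{-1,0,1\}^V$ with $\inpr{p}{x} = f(x)$. First I would record the workhorse: since $\BP(f)$ is cut out by finitely many inequalities $\inpr{p}{x} \le f(x)$, a direction $\alpha \in \Phi$ satisfies $p + \epsilon \alpha \in \BP(f)$ for some $\epsilon > 0$ if and only if $\inpr{\alpha}{x} \le 0$ for every tight $x$ (for small $\epsilon$ the non-tight inequalities are slack by compactness); this is the direction characterization underlying \Cref{lem:cone}. Second, I would establish that the tight sets are closed under $\sqcap$ and $\sqcup$: the coordinate-wise identity $\inpr{p}{x \sqcap y} + \inpr{p}{x \sqcup y} = \inpr{p}{x} + \inpr{p}{y}$ (checked on the four sign patterns of $(x(w), y(w))$) forces equality throughout $f(x) + f(y) = \inpr{p}{x} + \inpr{p}{y} = \inpr{p}{x \sqcap y} + \inpr{p}{x \sqcup y} \le f(x \sqcap y) + f(x \sqcup y) \le f(x) + f(y)$, so $x \sqcap y$ and $x \sqcup y$ are tight whenever $x, y$ are.

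For part (1), the middle and right conditions coincide directly: $p + \epsilon s_u \in \BP(f)$ for some $\epsilon > 0$ holds iff no tight $x$ has $\inpr{s_u}{x} = 1$, and this is exactly $\dep(p, s_u) = \zeros$, since the meet of all tight $x$ with $\inpr{s_u}{x} = 1$ carries the sign of $s_u$ on coordinate $u$ (hence is nonzero) precisely when at least one such tight set exists. For the left--middle equivalence the forward direction is convexity of $\BP(f)$, and the backward direction is an integral rounding step: for a non-tight $x$ the gap $f(x) - \inpr{p}{x}$ is a positive integer while $\inpr{s_u}{x} \le 1$, so a single step $s_u$ cannot break a non-tight inequality, and the tight ones are safe by the direction characterization; thus $p + s_u \in \BP(f) \cap \Z^V = B$.

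Part (2) is where the real work lies. I would translate (B) as the statement that no tight $x$ realizes a pattern $(\inpr{s_u}{x}, \inpr{s_v}{x}) \in \{(1,1),(1,0),(0,1)\}$, and (C), namely $-s_v \preceq \dep(p, s_u)$, as the statement that every tight $x$ with $\inpr{s_u}{x} = 1$ satisfies $\inpr{s_v}{x} = -1$. The implication (B) $\Rightarrow$ (C), and the exclusion of the patterns $(1,1)$ and $(1,0)$ in (C) $\Rightarrow$ (B), are immediate. The crux is excluding the pattern $(0,1)$, which (C) does not forbid directly: here I would use the hypothesis $p + s_u \notin B$, which by part (1) supplies a tight witness $x_1$ with $\inpr{s_u}{x_1} = 1$ (and $\inpr{s_v}{x_1} = -1$ by (C)); forming $x_0 \sqcup x_1$ with a putative $(0,1)$-tight set $x_0$ produces a tight set with $\inpr{s_u}{\cdot} = 1$ and $\inpr{s_v}{\cdot} = 0$, contradicting (C).

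Finally, for (B) $\Rightarrow$ (A) the rounding step reappears, but now a non-tight $x$ with gap exactly $1$ and $\inpr{s_u + s_v}{x} = 2$ could be violated by the two-step move. I would exclude such an $x$ by combining it with the tight witness $x_1$ through $\sqcap$ and $\sqcup$: a slack count shows that the total slack of $x_1 \sqcap x$ and $x_1 \sqcup x$ is at most $1$, so one of them is tight with $\inpr{s_u}{\cdot} = 1$ and $\inpr{s_v}{\cdot} = 0$, again contradicting (C); hence $p + s_u + s_v \in \BP(f) \cap \Z^V = B$. The main obstacle is precisely this interplay in part (2): the tangent-cone plus integrality argument alone leaves one bad sign pattern and one bad non-tight inequality, and both are eliminated only by pushing the hypothesis $p + s_u \notin B$ through the lattice structure of the tight sets.
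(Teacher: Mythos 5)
Your proof is correct, but there is nothing in the paper to compare it against: the paper does not prove this lemma at all, it imports it from Ando and Fujishige \cite{MP/AF96} (note also that the statement's ``$s_v \in \{\pm \chi_u\}$'' is a typo for $s_v \in \{\pm\chi_v\}$, which you silently and correctly fixed). Your reconstruction is self-contained and sound, and rests on exactly the right three ingredients: the direction characterization ($p+\epsilon\alpha \in \BP(f)$ for some $\epsilon>0$ iff $\inpr{\alpha}{x} \le 0$ for every tight $x$), which needs only finiteness of the constraint family $\{-1,0,1\}^V$ rather than ``compactness''; closure of the tight family under $\sqcap$ and $\sqcup$, forced by the coordinatewise identity plus bisubmodularity; and the integral rounding step, where integrality of $f$ and $p$ makes every positive slack at least $1$. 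The two genuinely delicate points in part (2) are handled correctly: the pattern $(0,1)$, which $-s_v \preceq \dep(p,s_u)$ does not directly forbid, dies upon taking $\sqcup$ with the tight witness $x_1$ supplied (via part (1)) by the hypothesis $p+s_u \notin B$; and the only constraint that could block the integral step $p+s_u+s_v$ --- slack exactly $1$ with pattern $(1,1)$ --- dies by the slack count on $x_1 \sqcap x$ and $x_1 \sqcup x$, where integrality turns ``total slack at most $1$'' into ``one of the two is tight,'' and both carry the forbidden pattern $(1,0)$. Two cosmetic omissions: state the trivial implication from $s_u+s_v \in \Phi_B(p)$ to the tangent-cone condition (take $\epsilon = 1$), and note explicitly that the hypothesis $p + s_u \notin B$ is what guarantees $\dep(p,s_u) \neq \zeros$, without which $-s_v \preceq \dep(p,s_u)$ could never hold. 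In substance your route is the same tight-vector/dep-function machinery developed in \cite{MP/AF96}; what it buys here is a short, special-case derivation that would make the note self-contained.
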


We prove statement (i).
By the convexity of $\BP(f)$ and \Cref{lem:cone},
we have $q - p \in \cone(\set{\alpha \in \Phi}[ p + \epsilon \alpha \in \BP(f) \text{ for some } \epsilon > 0 ])$.
Moreover, we can see that
\begin{align*}
    \cone(\set{\alpha \in \Phi}[ p + \epsilon \alpha \in \BP(f) \text{ for some } \epsilon > 0 ]) = \cone(\Phi_B(p)),
\end{align*}
which implies that \LP has a feasible solution, namely, statement (i) holds.
The inclusion ($\supseteq$) is clear.
To see the converse inclusion ($\subseteq$),
take any $\alpha \in \Phi$ such that $p + \epsilon \alpha \in \BP(f)$ for some $\epsilon > 0$.
If $\alpha = s_u$ for some $u \in V$ and $s_u \in \set{\pm \chi_u}$,
then we have $s_u \in \Phi_B(p)$ by \Cref{lem:basic}~(1).
Suppose $\alpha = s_u + s_v$ for some distinct $u,v \in V$, $s_u \in \set{\pm \chi_u}$, and $s_v \in \set{\pm \chi_v}$.
If $s_u, s_v \in \Phi_B(p)$, then $\alpha \in \cone(\Phi_B(p))$; we are done.
Otherwise, by \Cref{lem:basic}~(2),
we have $s_u + s_v \in \Phi_B(p)$.
This completes the proof of (i).

We prove statement (ii).
The following lemma is crucial in the proof.
\begin{lemma}\label{lem:sum}
For $u, v, w \in V$, let $s_u \in \set{\pm \chi_u}$, $s_v \in \set{\pm \chi_v}$, and $s_w \in \set{\pm \chi_w}$.
If $s_u, -s_u + s_v \in \Phi_B(p)$,
then $s_v \in \Phi_B(p)$.
Also if $s_u + s_v, -s_v + s_w \in \Phi_B(p)$ and $s_u \neq -s_w$,
then $s_u + s_w \in \Phi_B(p)$ or $s_u, s_w \in \Phi_B(p)$.
\end{lemma}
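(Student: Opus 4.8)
The plan is to phrase everything in terms of feasible directions at $p$ and to use that the tangent cone is a convex cone. Write $T_p \defeq \cone(\{x - p : x \in \BP(f)\})$ for the tangent cone of $\BP(f)$ at $p$, and $D(p) \defeq \{\alpha \in \Phi : p + \epsilon\alpha \in \BP(f) \text{ for some } \epsilon > 0\}$. I would first record three facts. First, $T_p$ is a convex cone and, by the standard description of the tangent cone of a convex set as its cone of feasible directions, $T_p = \{d \in \R^V : p + \epsilon d \in \BP(f) \text{ for some } \epsilon > 0\}$; in particular $D(p) = T_p \cap \Phi$ and $\Phi_B(p) \subseteq T_p$, since $p + \alpha \in B \subseteq \BP(f)$ for every $\alpha \in \Phi_B(p)$. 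Second, by \Cref{lem:basic}~(1) the sets $\Phi_B(p)$ and $D(p)$ agree on single vectors: $s_u \in \Phi_B(p) \iff s_u \in D(p)$. Third, by \Cref{lem:basic}~(2), whenever $p + s_u \notin B$, membership $s_u + s_v \in D(p)$ already forces $s_u + s_v \in \Phi_B(p)$. The engine of both assertions is that $T_p$ is closed under addition, so a sum of directions in $\Phi_B(p)$ lands back in $T_p$; the work lies in descending from $T_p$ to $\Phi_B(p)$.

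For the first assertion, note that $s_u, -s_u + s_v \in \Phi_B(p) \subseteq T_p$ (and $-s_u + s_v \in \Phi$ already forces $u \neq v$). Since $T_p$ is closed under addition, $s_v = s_u + (-s_u + s_v) \in T_p$, and as $s_v \in \Phi$ this gives $s_v \in D(p)$, hence $s_v \in \Phi_B(p)$ by the single-vector agreement. So the first part is pure additivity inside the tangent cone together with the fact that feasible \emph{single} directions are genuine members of $\Phi_B(p)$.

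For the second assertion I would again push the sum into the cone and then come back down by a short case analysis. From $s_u + s_v, -s_v + s_w \in \Phi_B(p) \subseteq T_p$ (which forces $u \neq v$ and $v \neq w$) I obtain $s_u + s_w = (s_u + s_v) + (-s_v + s_w) \in T_p$. If $u = w$, then $s_u = s_w$ by the hypothesis $s_u \neq -s_w$, so $2 s_u \in T_p$ gives $s_u \in T_p \cap \Phi = D(p)$ and hence $s_u = s_w \in \Phi_B(p)$, which is the second alternative. If $u \neq w$, then $s_u + s_w \in \Phi$, so $s_u + s_w \in D(p)$, and I split on feasibility of the single steps: (i) if $p + s_u \notin B$, then \Cref{lem:basic}~(2) with first coordinate $s_u$ converts $s_u + s_w \in D(p)$ into $s_u + s_w \in \Phi_B(p)$; (ii) if $p + s_u \in B$ but $p + s_w \notin B$, the same lemma with the roles of $s_u$ and $s_w$ exchanged again gives $s_u + s_w \in \Phi_B(p)$; (iii) if $p + s_u, p + s_w \in B$, then $s_u, s_w \in \Phi_B(p)$ directly. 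In every case one of the two alternatives holds.

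The only genuine subtlety --- and the reason the conclusion of the second part is a disjunction rather than simply $s_u + s_w \in \Phi_B(p)$ --- is that tangent-cone membership transfers to $\Phi_B(p)$ for free only for single directions. For a doubleton the unit integral step $p \mapsto p + s_u + s_w$ may overshoot the face of $\BP(f)$ exposed in direction $s_u$, so $s_u + s_w \in D(p)$ need not imply $p + s_u + s_w \in B$; this is exactly the phenomenon controlled by the hypothesis $p + s_u \notin B$ in \Cref{lem:basic}~(2). Deciding which of the single steps $p + s_u$, $p + s_w$ already lie in $B$ is therefore where the real content sits, and the three-way split above is designed precisely to absorb it. I expect the first assertion to be immediate, and this bookkeeping in the second to be the main (though routine) obstacle.
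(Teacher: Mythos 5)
Your proof is correct, and it takes a genuinely different route from the paper's. The paper never invokes the tangent cone in this lemma: it proves both assertions by contradiction inside the $\dep$/$\preceq$ calculus, i.e., via the third equivalence in \Cref{lem:basic}~(2). For the first assertion it assumes $s_v \notin \Phi_B(p)$ and extracts $s_u \preceq \dep(p,s_u) \preceq \dep(p,s_v)$ from $-s_u+s_v \in \Phi_B(p)$, so $\dep(p,s_u) \neq \zeros$ and \Cref{lem:basic}~(1) contradicts $s_u \in \Phi_B(p)$. For the second it assumes $s_u \notin \Phi_B(p)$ and chains $-s_w \preceq \dep(p,-s_w) \preceq \dep(p,-s_v) \preceq \dep(p,s_u)$; this both rules out $u=w$ (it would place $s_u$ and $-s_u$ below $\dep(p,s_u)$ simultaneously) and, by \Cref{lem:basic}~(2) again, yields $s_u+s_w \in \Phi_B(p)$, the remaining cases being symmetric or trivial. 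You instead use only the middle equivalence of \Cref{lem:basic} (feasible directions in $\BP(f)$), let additivity of the conical hull do the combinatorial work, and then descend from feasible $\Phi$-directions back to $\Phi_B(p)$ via your split (i)--(iii). Notably, that descent is exactly the argument the paper uses elsewhere, namely to prove statement~(i) in \Cref{subsec:a=>c} (that the cone of feasible $\Phi$-directions equals $\cone(\Phi_B(p))$), so your proof in effect derives \Cref{lem:sum} from that same mechanism, with the convexity of $\BP(f)$ as the visible engine. What your route buys: it is direct rather than by contradiction, the degenerate case $u=w$ is dispatched transparently ($2s_u \in T_p$), and the case analysis is explicitly exhaustive where the paper leaves the symmetric case $s_w \notin \Phi_B(p)$ to the reader. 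What the paper's route buys: it needs no identification of the conical-hull tangent cone with the cone of feasible directions (a small but genuine step, valid because $\BP(f)$ is convex, which you correctly flag as needing justification), and its $\preceq$-chains arrive already in the exact form that \Cref{lem:basic}~(2) consumes, so no translation layer between cone membership and $B$-membership is required.
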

\begin{proof}
First we show the former statement.
Note that, by $-s_u + s_v \in \Phi_B(p)$, we have $u \neq v$.
Suppose, to the contrary, that $s_v \notin \Phi_B(p)$.
Then, by $-s_u + s_v \in \Phi_B(p)$ and \Cref{lem:basic}~(2),
we have $s_u \preceq \dep(p, s_u) \preceq \dep(p, s_v)$,
implying $\dep(p, s_u) \neq \zeros$.
Hence $s_u \notin \Phi_B(p)$ by \Cref{lem:basic}~(1),
which contradicts the assumption $s_u \in \Phi_B(p)$.

Next we show the latter statement.
Note that, by $s_u + s_v, -s_v + s_w \in \Phi_B(p)$, we have $u \neq v \neq w$.
Suppose that $s_u \notin \Phi_B(p)$.
By $s_u + s_v \in \Phi_B(p)$ and \Cref{lem:basic}~(2),
we have $-s_v \preceq \dep(p, -s_v) \preceq \dep(p, s_u)$,
implying $\dep(p, -s_v) \neq \zeros$.
Therefore $-s_v \notin \Phi_B(p)$ by \Cref{lem:basic}~(1).
Furthermore, by $-s_v + s_w \in \Phi_B(p)$ and \Cref{lem:basic}~(2),
we obtain $-s_w \preceq \dep(p, -s_w) \preceq \dep(p, -s_v) \preceq \dep(p, s_u)$.
Hence $s_u \neq -s_w$ implies $u \neq w$,
and by \Cref{lem:basic}~(2),
we obtain $s_u + s_w \in \Phi_B(p)$.
\end{proof}

Suppose, to the contrary, that the optimal value of the problem \LP is strictly positive.
Let $\mu^* = (\mu^*_\alpha)_{\alpha \in \Phi_B(p)}$ be an optimal solution.
Take any $\alpha_0 \in \prn{\Phi_B(p) \setminus \Phi_B(p, q)} \cap \supp(\mu^*)$
(such $\alpha_0$ exists by the assumption of the positivity of the optimal value).
Then there is $u \in \supp(\alpha_0)$ such that $\alpha_0(u) (q(u) - p(u)) \leq 0$.
Since $\sum_{\alpha \in \Phi_B(p)} \mu^*_\alpha \alpha = q - p$,
there is $\alpha_1 \in \supp(\mu^*)$ such that $\alpha_0(u) + \alpha_1(u) = 0$, or equivalently, $\alpha_0(u) \alpha_1(u) = -1$.
Then, for $\epsilon \defeq \min \set{ \mu^*_{\alpha_0}, \mu^*_{\alpha_1} }$,
we have the following nonnegative combination representation of $q - p$:
\begin{align}\label{eq:q-p}
    q - p = \prn{\mu^*_{\alpha_0} - \epsilon} \alpha_0 + \prn{\mu^*_{\alpha_1} - \epsilon} \alpha_1 + \sum_{\alpha \in \Phi_B(p) \setminus \set{\alpha_0, \alpha_1}} \mu^*_\alpha \alpha + \epsilon \prn{ \alpha_0 + \alpha_1 }.
\end{align}
It follows from \Cref{lem:sum}
that
$\alpha_0 + \alpha_1 = 0$, $\alpha_0 + \alpha_1 \in \Phi_B(p)$, or there are $s_v \in \set{\pm \chi_v}$ and $s_w \in \set{\pm \chi_w}$ for some $v, w \in V$
such that $s_v, s_w \in \Phi_B(p)$ and $s_v + s_w = \alpha_0 + \alpha_1$.
Hence the RHS of~\eqref{eq:q-p} can be viewed as a nonnegative combination of $\alpha \in \Phi_B(p)$.
Let $\mu^\prime = (\mu^\prime_\alpha)_{\alpha \in \Phi_B(p)}$ denote the corresponding coefficient.

For such $\mu^\prime$,
we have
\begin{align*}
    \sum \set{v(\alpha) \mu^\prime_\alpha}[\alpha \in \Phi_B(p)] &= \sum \set{v(\alpha) \mu^*_\alpha}[\alpha \in \Phi_B(p)] - \epsilon (v(\alpha_0) + v(\alpha_1) - v(\alpha_0 + \alpha_1))\\
    &\leq \sum \set{v(\alpha) \mu^*_\alpha}[\alpha \in \Phi_B(p)] - \epsilon,
\end{align*}
where the inequality follows from $\alpha_0(u) \alpha_1(u) = -1$ and \Cref{lem:v}.
This contradicts the minimality of $\mu^*$.
Therefore, statement (ii) holds.

Statement (iii) immediately follows from the fact~\cite[Example~1 and Theorem~20]{Net/AK06}
that,
for an integer matrix $A = (a_{ij}) \in \Z^{m \times n}$ satisfying $\sum_{i = 1}^n \abs{a_{ij}} \leq 2$ for each column index $j$
and integral vectors $b \in \Z^m$ and $c \in \Z^n$,
the linear programming
$\min \set{\inpr{c}{x}}[Ax = b,\ x \geq 0]$
admits a half-integral optimal solution (if an optimal solution exists).
Since our problem \LP enjoys the above situation and has an optimal solution by (ii),
we obtain statement (iii).

\section*{Acknowledgments}
The author thanks Satoru Fujishige for his valuable comments.
This work was supported by JSPS KAKENHI Grant Numbers JP20K23323, JP20H05795, JP22K17854, Japan.

\end{document}